\documentclass{amsart}

\usepackage[margin=3.5cm]{geometry}

\usepackage{amsmath,amscd,amssymb,amsfonts,latexsym,wasysym, mathrsfs, mathtools,hhline,color}
\usepackage[all, cmtip]{xy}

\usepackage{csquotes}
\usepackage{url}

\definecolor{hot}{RGB}{65,105,225}

\usepackage[pagebackref=true,colorlinks=true, linkcolor=hot ,  citecolor=hot, urlcolor=hot]{hyperref}

\usepackage{ textcomp }
\usepackage{ tipa }

\usepackage{graphicx,enumerate}

\theoremstyle{plain}
\newtheorem{theorem}{Theorem}[section]
\newtheorem{prop}[theorem]{Proposition}

\newtheorem{lm}[theorem]{Lemma}

\newtheorem{cor}[theorem]{Corollary}
\newtheorem{conj}[theorem]{Conjecture}

\newtheorem{thrm}[theorem]{Theorem}

\theoremstyle{definition}

\newtheorem{defn}[theorem]{Definition}

\newtheorem{rmk}[theorem]{Remark}

\newtheorem{ex}[theorem]{Example}
\newtheorem*{ex*}{Example}

\def\be{\begin{equation}}
\def\ee{\end{equation}}

\def\bt{\begin{thrm}}
\def\et{\end{thrm}}

\def\bc{\begin{cor}}
\def\ec{\end{cor}}

\def\br{\begin{rmk}}
\def\er{\end{rmk}}

\def\bp{\begin{prop}}
\def\ep{\end{prop}}

\def\bl{\begin{lm}}
\def\el{\end{lm}}

\def\bex{\begin{ex}}
\def\eex{\end{ex}}

\def\bd{\begin{defn}}
\def\ed{\end{defn}}

\newcommand\sF{{\mathcal F}}

\DeclareMathOperator{\reg}{reg}                  
                  

\DeclareMathOperator{\sFc}{\sF^{\centerdot}}

\def\bC{\mathbb{C}}

\def\lra{\longrightarrow}

\def\bZ{\mathbb{Z}}




\newcommand{\C}{\mathbb{C}}

\def\part{\partial}



\def\lra{\longrightarrow}

\def\bd{\begin{defn}}
\def\ed{\end{defn}}
\def\bt{\begin{thm}}
\def\et{\end{thm}}
\def\br{\begin{remark}}
\def\er{\end{remark}}
\def\bc{\begin{cor}}
\def\ec{\end{cor}}
\def\bp{\begin{prop}}
\def\ep{\end{prop}}
\def\be{\begin{equation}}
\def\ee{\end{equation}}
\def\bn{\begin{enumerate}}
\def\en{\end{enumerate}}
\def\ba{\begin{array}}
\def\ea{\end{array}}
\def\bex{\begin{example}}
\def\eex{\end{example}}

\def\bC{\mathbb{C}}

\def\lra{\longrightarrow}

\def\bZ{\mathbb{Z}}
\def\bZ{\mathbb{Z}}


\title[Singular Singer-Hopf Conjecture]{On singular variants of the Singer-Hopf Conjecture}

\author{Lauren\c tiu Maxim}
\address{Department of Mathematics,         University of Wisconsin-Madison,  480 Lincoln Drive, Madison WI 53706-1388, USA.}
\email {maxim@math.wisc.edu}

\date{\today}

\keywords{Euler characteristic, aspherical manifold, Singer-Hopf conjecture, constructible function, characteristic cycle, nef vector bundle}

\dedicatory{Dedicated to the memory of Prof. Jean-Pierre Demailly}

\subjclass[2010]{14F35, 14F45, 14C17, 32S60, 58K30}

\begin{document}

\maketitle
\begin{abstract}  
We propose singular variants of the Singer-Hopf conjecture, formulated in terms of the Euler-Mather characteristic, intersection homology Euler characteristic and, resp., virtual Euler characteristic of a closed irreducible subvariety of an aspherical complex projective manifold. We prove the conjecture under the assumption that the cotangent bundle of the ambient variety is numerically effective (nef), or, more generally, when the ambient manifold admits a finite morphism to a complex projective manifold with a nef cotangent bundle. 
\end{abstract}



\section{Introduction}\label{intro}

The main purpose of this note is to overview and enhance some of the recent developments \cite{AW, LMW} around the Singer-Hopf conjecture in the complex algebraic context. In order to reach a wider audience, results are formulated in the convenient language of constructible functions. Along the way, we generalize a result of \cite{AMSS} on the non-negativity of Euler characteristics of an important class of constructible functions.

Recall that a connected CW complex is {\it aspherical} if its universal cover is contractible. Closed Riemannian manifolds with non-positive sectional curvature are aspherical. The following conjecture on the Euler characteristic of a closed aspherical manifold  was made by Hopf (and later on strengthened by Singer):
\begin{conj}[Singer-Hopf]\label{SH} If $X$ is a closed aspherical manifold of real dimension $2n$, then \be\label{ho}(-1)^n  \cdot \chi(X) \geq 0.\ee \end{conj}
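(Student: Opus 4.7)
The plan is to approach Conjecture \ref{SH} via a Gauss--Bonnet-type identification of $\chi(X)$ with a characteristic number, and then to extract positivity from the aspherical hypothesis. Since the tools that actually have a chance of producing sign information for $\chi(X)$ (Bochner formulas, Fulton--Lazarsfeld positivity, non-abelian Hodge theory) are complex-analytic in nature, I would first restrict to the case where $X$ admits a K\"ahler, and ultimately projective, structure; this is exactly the setting of the rest of this note and already contains the essential difficulty.

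In that setting, the first step is to rewrite \eqref{ho} in terms of Chern classes. Writing $n=\dim_{\C}X$, classical Gauss--Bonnet gives
\[
(-1)^n\chi(X)\;=\;\int_X c_n(\Omega^1_X),
\]
so the conjecture becomes the non-negativity of a single Chern number of the holomorphic cotangent bundle. The next step is to promote the topological asphericity of $X$ to an analytic positivity of $\Omega^1_X$. The cleanest scenario is nef-ness of $\Omega^1_X$: granting this, the Bloch--Gieseker / Fulton--Lazarsfeld theorem guarantees that every Schur polynomial in the Chern classes of $\Omega^1_X$ is represented by a numerically non-negative class, so in particular $\int_X c_n(\Omega^1_X)\geq 0$ and we are done. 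For manifolds admitting a K\"ahler metric of non-positive sectional curvature, nef-ness of $\Omega^1_X$ is essentially a Bochner--Kodaira computation. In general, I would try to manufacture the needed positivity from Simpson's non-abelian Hodge correspondence: the universal cover being contractible should let one build a tautological Higgs bundle whose existence produces pseudo-effective subsheaves in symmetric powers of $\Omega^1_X$, and one would hope to leverage these, together with characteristic cycle or constructible-function techniques in the spirit of the later sections, to pin down the sign of the top Chern number.

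The main obstacle is precisely this passage from asphericity to positivity. Asphericity is a purely homotopical condition on $X$, while nef-ness of $\Omega^1_X$ (or any analogous analytic positivity) is not of homotopical nature, and no direct implication between the two is presently known --- this gap is essentially what makes Conjecture \ref{SH} open. I would therefore expect any rigorous execution of the plan to require either (a) an auxiliary hypothesis of the form \emph{$\Omega^1_X$ is nef}, or more flexibly that $X$ admits a finite morphism to a projective manifold with nef cotangent bundle, which is the framework taken up in the body of this note, or (b) a genuinely new mechanism that converts the $K(\pi,1)$ condition into Hodge-theoretic positivity. Absent such an input, the Gauss--Bonnet plus Fulton--Lazarsfeld strategy yields only the singular variants treated in the results that follow, rather than Conjecture \ref{SH} itself.
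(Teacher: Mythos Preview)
The statement you are addressing is labeled a \emph{conjecture} in the paper, and the paper does not claim or contain a proof of it; it only surveys the known partial results (Gromov, Cao--Xavier, Jost--Zuo, Liu--Maxim--Wang, Arapura--Wang) and then proves singular variants under the extra hypothesis that the cotangent bundle is nef, or that $X$ maps finitely to such a manifold. So there is no ``paper's own proof'' to compare against, and a complete proof is not expected here.

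That said, your write-up is accurate about the state of affairs and honest about the gap. Your conditional strategy---rewrite $(-1)^n\chi(X)$ as $\int_X c_n(\Omega^1_X)$ and invoke Fulton--Lazarsfeld/Demailly--Peternell--Schneider positivity once $\Omega^1_X$ is nef---is essentially the same mechanism the paper uses in Theorem~\ref{th-main}/Theorem~\ref{mapp}, just phrased in Chern-class language rather than via Kashiwara's index formula and characteristic cycles. For the smooth case $Z=X$ these coincide: $CC(\mathbf{1}_X)=(-1)^n\,T^*_XX$, so $(-1)^n\chi(X)=T^*_XX\cdot T^*_XX$, which is exactly the top Chern number of $T^*X$ computed as a self-intersection of the zero section; the paper's Theorem~\ref{thm_nef} and your appeal to Fulton--Lazarsfeld Schur positivity are then the same input. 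The characteristic-cycle formulation buys the paper its singular generalizations (Conjectures~\ref{co10} and \ref{co2}), which the pure Gauss--Bonnet version does not reach.

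Your identification of the genuine obstacle---that asphericity is homotopical while nefness is not, and no implication is known---matches the paper's own position (it records the Liu--Maxim--Wang conjecture that aspherical projective manifolds have nef cotangent bundle, and notes a recent counterexample to a related Stein-cover conjecture). The speculative step you float, producing pseudo-effective subsheaves in symmetric powers of $\Omega^1_X$ via non-abelian Hodge theory, is in the spirit of the Arapura--Wang argument summarized in Theorem~\ref{34}, but there the input is a rigid almost faithful representation rather than bare asphericity; absent such an input your plan remains, as you say, a plan.
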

The conjecture is clearly true for $n=1$ (i.e., real dimension $2$). 
For closed Riemannian manifolds with non-positive sectional curvature, the conjecture is due to Hopf and Chern, and it was mentioned in the list of problems \cite{Yau} compiled by Yau in 1982; it it true in this case for $n=2$ since the Gauss-Bonnet integrand has the desired sign, cf. \cite[Theorem 5]{Ch} where the proof is attributed to Milnor. To our knowledge, Conjecture \ref{SH} is not known for all closed aspherical $4$-manifolds, and it is open for $n \geq 3$. 

In \cite{Gro}, Gromov introduced the notion of K\"ahler hyperbolicity, including compact K\"ahler manifolds with negative sectional curvature, and he verified Conjecture \ref{SH} for K\"ahler hyperbolic manifolds. Cao-Xavier \cite{CX} and  Jost-Zuo \cite{JZ} independently introduced the concept of K\"ahler nonellipticity, including  compact K\"ahler manifolds with non-positive sectional curvature, and settled the Singer-Hopf conjecture in this case (cf. also \cite{Es}). All these works proved a corresponding version of Conjecture \ref{SH} by means of vanishing $L^2$-cohomology. More recently, Liu-Maxim-Wang \cite{LMW} proved the complex projective version of Conjecture \ref{SH} under the additional assumption that the (holomorphic) cotangent bundle $T^*X$ of $X$ is {\it numerically effective} ({\it nef}, for short), and they conjectured that aspherical complex projective manifolds have nef cotangent bundles.
Finally, Arapura-Wang gave in \cite{AW} a new proof of Conjecture \ref{SH} for compact K\"ahler manifolds  with non-positive sectional curvature, using the fact that the cotangent bundle of such a manifold is nef.

In this note we propose the following {\it singular} variants of the Singer-Hopf Conjecture \ref{SH} in the complex projective context:

\begin{conj}\label{co10} If $Z \subseteq X$ is a closed irreducible subvariety of an aspherical complex projective manifold $X$, then:
\begin{itemize}
\item[(i)] $(-1)^{\dim_\C Z} \cdot \chi(Z, Eu_Z) \geq 0$, where $Eu_Z$ is the {local Euler obstruction} function of MacPherson and $\chi(Z, Eu_Z)$ is the corresponding Euler-Mather characteristic.  
\item[(ii)] $(-1)^{\dim_\C Z} \cdot \chi^{IH}(Z) =\chi(Z, ic_Z)\geq 0$, where $\chi^{IH}(Z)$ denotes the intersection cohomology Euler characteristic of $Z$, with $ic_Z$ defined by taking the stalkwise Euler characteristic of the $IC$-complex $IC_Z$ on $Z$.
\item[(iii)] $\chi_{vir}(Z):=\chi(Z, \nu_Z) \geq 0$, where $\nu_Z$ is Behrend's constructible function of $Z$ and $\chi_{vir}(Z)$ is the corresponding Donaldson-Thomas invariant. 
\end{itemize}
\end{conj}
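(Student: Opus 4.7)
The plan is to attempt the singular Singer-Hopf conjecture in two stages, mirroring what has worked in the smooth setting. The first stage is a reduction to the nef-cotangent case: one would invoke the Liu-Maxim-Wang conjecture that an aspherical complex projective manifold $X$ has nef cotangent bundle $T^*X$. Granting this, the three inequalities in Conjecture~\ref{co10} would follow from a general statement about constructible functions on a smooth projective $X$ with $T^*X$ nef, namely that $(-1)^{\dim_\C Z}\chi(X,\alpha)\ge 0$ for any $\alpha\in F(X)$ whose characteristic cycle $CC(\alpha)\subset T^*X$ is effective with top-dimensional support equal to $\dim_\C Z$. Each of $Eu_Z$, $ic_Z$ and $\nu_Z$ satisfies this effectivity property: $CC(Eu_Z)$ is (up to a known sign) the conormal cycle $[T^*_Z X]$; $CC(ic_Z)$ is the characteristic cycle of the self-dual perverse sheaf $IC_Z$, a positive combination of conormal cycles on a Whitney stratification of $Z$; and $CC(\nu_Z)$ is effective by its local description in terms of local Euler obstructions of strata of the coarse stratification coming from the Hilbert-Chow-type structure on $Z$.

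A second, more ambitious stage would be to approach the conjecture directly, bypassing the nef hypothesis. For part (ii), the natural plan is to lift the $L^2$-vanishing strategy of Gromov, Cao-Xavier and Jost-Zuo to singular subvarieties: pass to the universal cover $\widetilde Z$ of $Z$ (or to the preimage of $Z$ in the universal cover of $X$), form the $L^2$-intersection cohomology, and attempt to prove that $L^2$-$IH^k(\widetilde Z)$ vanishes for $k\ne\dim_\C Z$; Atiyah's $L^2$-index theorem for normal coverings, adapted to $IH$ via Cheeger's or Saper's framework, would then yield the sign of $\chi^{IH}(Z)$. Parts (i) and (iii) are more subtle because $Eu_Z$ and $\nu_Z$ are not stalkwise Euler characteristics of a self-dual perverse sheaf. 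For these, the plan is to decompose $Eu_Z$ and $\nu_Z$ as $\mathbb Z$-linear combinations of $ic_W$ for subvarieties $W\subseteq Z$ using a Whitney stratification, and then show that the relevant linear combination has the correct sign by induction on $\dim W$, using part (ii) (once established) on each closed stratum.

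The main obstacle is that Stage~1 hinges on the open LMW conjecture that asphericity of $X$ forces $T^*X$ to be nef; this is unknown even for smooth $X$, and without it the effectivity of $CC(Eu_Z)$, $CC(ic_Z)$, $CC(\nu_Z)$ cannot be converted into sign information on the Euler characteristic. Stage~2 demands $L^2$-vanishing theorems for intersection cohomology of singular subvarieties of aspherical manifolds, where the curvature arguments of the Kähler-hyperbolic and Kähler-nonelliptic proofs fail because $Z$ need not inherit such a structure from $X$, and the Cheeger-Goresky-MacPherson conjecture on $L^2$-$IH$ is itself only partially settled. A realistic intermediate goal is therefore to weaken the asphericity hypothesis (for instance, to Kähler hyperbolicity of $X$, or to virtual nefness of $T^*X$ after a finite étale cover) and prove the three singular statements unconditionally in that setting — which is precisely the path taken in the main theorem of this paper.
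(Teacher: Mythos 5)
The statement you are addressing is a \emph{conjecture}, and the paper does not prove it unconditionally either; your Stage~1 is essentially the paper's own treatment. The paper reformulates (i) as Conjecture \ref{co2} (Proposition \ref{p21}), observes that $(-1)^{\dim_\C Z}Eu_Z$, $ic_Z$ and $\nu_Z$ all have effective characteristic cycles, and then proves the resulting positivity statement only under the hypothesis that $X$ admits a finite morphism to a projective manifold with nef cotangent bundle, via Kashiwara's index theorem together with the Fulton--Lazarsfeld/Demailly--Peternell--Schneider positivity of $T^*_ZY\cdot T^*_YY$ (Theorems \ref{thm_nef}, \ref{mapp} and \ref{th-main}). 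You correctly identify that the only missing ingredient for the general aspherical case is the open Liu--Maxim--Wang conjecture that asphericity forces $T^*X$ to be nef, and that the honest unconditional output is the nef case --- which is exactly what the paper delivers. One bookkeeping slip: the general statement you should quote is $\chi(X,\alpha)\geq 0$ for \emph{any} $\alpha$ with effective characteristic cycle, with no extra factor $(-1)^{\dim_\C Z}$ and no condition on the dimension of the support of $CC(\alpha)$; the sign is already absorbed into the function, since $CC(Eu_Z)=(-1)^{\dim_\C Z}T^*_ZX$ by \eqref{cc}, and $ic_Z$, $\nu_Z$ carry the sign by construction. As written, your prefactor $(-1)^{\dim_\C Z}$ would give the wrong sign for odd-dimensional $Z$. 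Also, the effectivity of $CC(\nu_Z)$ has nothing to do with a Hilbert--Chow stratification: Behrend's function is by definition the image under the isomorphism $Eu:Z(X)\to F(X)$ of a signed cycle whose signs are exactly those making the characteristic cycle effective; the paper simply cites \cite{AMSS,Beh}.

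Your Stage~2 goes beyond anything in the paper, and its final step contains a genuine gap. You propose to deduce (i) and (iii) from (ii) by writing $Eu_Z$ and $\nu_Z$ as $\bZ$-linear combinations of the functions $ic_W$ for $W\subseteq Z$ and inducting on dimension. This cannot work as stated: the change of basis between $\{(-1)^{\dim_\C W}Eu_W\}$ and $\{ic_W\}$ is unitriangular with integer coefficients of \emph{no controlled sign}, so knowing $\chi(W,ic_W)\geq 0$ for every $W$ gives no information about a combination $\sum_W a_W\, ic_W$ with some $a_W<0$. The set of constructible functions with non-negative Euler characteristic is a cone, not a subgroup, which is precisely why the paper works with the closed condition ``$CC(\varphi)$ effective'' rather than trying to reduce everything to perverse (or $IC$) coefficients. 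Any direct attack on (i) and (iii) must therefore argue at the level of the conormal cycles $T^*_ZX$ themselves (as the paper does through Theorem \ref{thm_nef}), not by decomposition into intersection-cohomology pieces.
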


If $Z=X$, then all statements in Conjecture \ref{co10} reduce to the complex projective version of the Singer-Hopf Conjecture \ref{SH}. More generally, if $Z$ a smooth closed irreducible subvariety of the aspherical complex projective manifold $X$, all statements in Conjecture \ref{co10} become $(-1)^{\dim_\C Z} \cdot \chi(Z) \geq 0$.

In fact, we make the following more uniform conjecture (which, as explained in Proposition \ref{p21}, turns out to be equivalent to Conjecture \ref{co10}(i)).
\begin{conj}\label{co2} 
Let $X$ be an aspherical complex projective manifold and let $\varphi$ be a constructible function on $X$ with effective characteristic cycle.
Then the Euler characteristic of $\varphi$  is non-negative, that is, $\chi(X,\varphi) \geq 0$.
\end{conj}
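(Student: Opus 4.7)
The plan is to reduce Conjecture \ref{co2} to Conjecture \ref{co10}(i) via the equivalence asserted in Proposition \ref{p21}, and then to attempt to deduce the latter directly from asphericity by an $L^2$-sheaf-theoretic argument on the contractible universal cover of $X$. For the reduction, any effective characteristic cycle may be written as $CC(\varphi) = \sum_\alpha m_\alpha [T^*_{Z_\alpha} X]$ with $m_\alpha \in \Z_{\geq 0}$ and $Z_\alpha \subseteq X$ closed irreducible. Since $CC(Eu_Z) = (-1)^{\dim_\C Z}[T^*_Z X]$ and $CC$ is an isomorphism between constructible functions and conic Lagrangian cycles, this forces $\varphi = \sum_\alpha m_\alpha (-1)^{\dim_\C Z_\alpha} Eu_{Z_\alpha}$, and by additivity of the Euler characteristic on constructible functions,
\[
\chi(X,\varphi) \;=\; \sum_\alpha m_\alpha \,(-1)^{\dim_\C Z_\alpha}\, \chi(Z_\alpha, Eu_{Z_\alpha}),
\]
so Conjecture \ref{co10}(i) implies Conjecture \ref{co2}; the converse is immediate by taking $\varphi = (-1)^{\dim_\C Z}Eu_Z$, whose characteristic cycle $[T^*_Z X]$ is effective.

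The core of the plan is then the attack on $(-1)^{\dim_\C Z}\chi(Z,Eu_Z)\ge 0$. By the Dubson-Kashiwara index theorem, the Euler-Mather characteristic can be written as a Lagrangian intersection number in $T^*X$,
\[
\chi(Z, Eu_Z) \;=\; (-1)^{\dim_\C Z}\,\bigl([T^*_Z X] \cdot [0_X]\bigr)_{T^*X},
\]
so Conjecture \ref{co10}(i) is equivalent to non-negativity of this intersection. To bring asphericity to bear, I would pull back the pair $([T^*_Z X], [0_X])$ to the cotangent bundle of the contractible universal cover $p:\ti X \to X$, obtaining a $\Gamma$-equivariant effective conormal Lagrangian $\ti L := p^{-1}(T^*_Z X) \subset T^*\ti X$, where $\Gamma := \pi_1(X)$ acts freely on $\ti X$. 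The goal is a $\Gamma$-equivariant $L^2$ index formula expressing $\chi(X,\varphi)$ as an alternating sum of von Neumann $\Gamma$-dimensions of spaces of $L^2$-harmonic forms on $\ti X$ with coefficients in the lift of the constructible complex underlying $\varphi$. Because $\ti X$ is contractible, a Singer-type vanishing of $L^2$-cohomology in non-middle degrees would then pin down a single favorable sign.

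The main obstacle, and the reason Conjecture \ref{co2} remains open, is precisely this $L^2$-vanishing step. In the smooth K\"ahler setting, vanishing of $L^2$-cohomology outside the middle degree underlies the arguments of Gromov, Cao-Xavier and Jost-Zuo, and it requires hypotheses such as K\"ahler hyperbolicity, non-positive sectional curvature, or a nef cotangent bundle as used in \cite{LMW}; no comparable $L^2$-vanishing theorem is presently available for genuinely singular constructible coefficients on a merely aspherical base. A concrete realization of the plan would therefore need either the Shafarevich holomorphic-convexity conjecture for $\ti X$—providing enough plurisubharmonic exhaustion to run Andreotti-Vesentini-type arguments on $\ti X$—or the Liu-Maxim-Wang expectation that asphericity of a projective manifold forces $T^*X$ to be nef, at which point Conjecture \ref{co2} reduces to the nef case treated elsewhere in this note. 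Establishing the desired $L^2$-vanishing without either of these auxiliary inputs is, in my view, the essential difficulty.
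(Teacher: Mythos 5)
The statement you are addressing is labeled a \emph{conjecture} in the paper, and the paper does not claim an unconditional proof of it; it only establishes it under extra hypotheses (Theorem \ref{th-main}/Theorem \ref{mapp}: $X$ admits a finite morphism to a projective manifold with nef cotangent bundle, or Theorem \ref{34}: existence of a cohomologically rigid almost faithful semi-simple representation). Your first step --- the reduction of Conjecture \ref{co2} to Conjecture \ref{co10}(i) by decomposing an effective $CC(\varphi)$ into conormal cycles and using $CC(Eu_Z)=(-1)^{\dim_\C Z}\cdot T^*_ZX$ together with the isomorphism between $Z(X)$ and $F(X)$ --- is correct and is exactly the content and proof of Proposition \ref{p21} (only note that effectivity in the paper means all coefficients are strictly positive, not merely non-negative; this is harmless). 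Your reformulation of $\chi(Z,Eu_Z)$ as the intersection number $T^*_ZX\cdot T^*_XX$ via the global index theorem \eqref{git} is likewise the correct starting point and is precisely how the paper's conditional proofs begin.

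The genuine gap is, as you yourself acknowledge, everything after that: the proposed $\Gamma$-equivariant $L^2$ index formula on the contractible universal cover, and the Singer-type vanishing of $L^2$-cohomology outside middle degree for constructible (hence genuinely singular) coefficients on a merely aspherical base, do not exist in the literature and are not supplied here; so the proposal is a research program rather than a proof. It is worth noting that the paper's actual (conditional) argument takes a different and entirely algebro-geometric route at this juncture: instead of $L^2$-methods, it invokes the Fulton--Lazarsfeld/Demailly--Peternell--Schneider semi-positivity theorem (Theorem \ref{thm_nef}) to conclude $T^*_ZY\cdot T^*_YY\geq 0$ for every conormal variety when $T^*Y$ is nef, after pushing $\varphi$ forward along a finite morphism $f:X\to Y$ (Proposition \ref{ppf}(i) guaranteeing that effectivity of the characteristic cycle is preserved). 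Your closing observation --- that the Liu--Maxim--Wang conjecture on nefness of $T^*X$ for aspherical projective manifolds would reduce Conjecture \ref{co2} to the case the paper does handle --- is accurate and is indeed the bridge the paper itself points to; but absent that input or a new $L^2$-vanishing theorem, no proof of the conjecture is obtained, by you or by the paper.
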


If $X$ is an abelian variety, Conjecture \ref{co2} can be deduced from \cite[Theorem 1.3]{FK}, see also \cite{AMSS} and \cite{EGM}. When $\varphi$ is obtained (by taking the stalkwise Euler characteristic) from a perverse sheaf, Conjecture \ref{co2} reduces to \cite[Conjecture 6.2]{LMW}.

\smallskip

In this note, we prove Conjecture \ref{co2} under the additional assumption that the cotangent bundle $T^*X$ of $X$ is nef (e.g., globally generated), or, more generally, if $X$ admits a finite morphism to a complex projective manifold with nef cotangent bundle. Moreover, the inequalities become strict if ``nef'' is replaced by ``ample''. 

\begin{theorem}\label{th-main}
Let $X$ be a complex projective manifold and let $\varphi$ be a constructible function on $X$ with effective characteristic cycle. Assume $X$ admits a finite morphism $f:X \to Y$ to a complex projective manifold $Y$ with nef cotangent bundle (e.g., $Y$ has non-positive sectional curvature). Then $\chi(X,\varphi) \geq 0$. Moreover, the inequality is strict if $T^*Y$ is ample (e.g., $Y$ has negative sectional curvature). 
\end{theorem}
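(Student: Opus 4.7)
The plan is to reduce the theorem to the case $X=Y$, in which $T^*X$ itself is nef (respectively ample), by proper pushforward of characteristic cycles along the finite morphism $f$, and then to treat the reduced case by combining MacPherson's description of the Euler--Mather characteristic through the Nash blowup with the Fulton--Lazarsfeld positivity of Chern classes of nef vector bundles. Since $f$ is proper, $\chi(X,\varphi)=\chi(Y,f_*\varphi)$, and under the standard compatibility $CC(f_*\varphi)=f_*CC(\varphi)$ of characteristic cycles with proper pushforward, the reduction boils down to verifying that the Lagrangian pushforward of an effective conic cycle along the finite morphism $f$ remains effective. For an irreducible subvariety $V\subseteq X$, $f_*[T^*_V X]$ is a positive multiple of $[T^*_{f(V)}Y]$ (with multiplicity equal to the degree of $V\to f(V)$) plus correction terms supported on conormals of proper subvarieties of $f(V)$ arising from the non-\'etale strata of $f|_V$, and one checks that these corrections enter with non-negative coefficients. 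Granting this, we may replace $(X,\varphi)$ by $(Y,f_*\varphi)$ and assume $T^*X$ is nef.

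In this reduced setting, write $CC(\varphi)=\sum_j n_j [T^*_{W_j} X]$ with $n_j\geq 0$ and $d_j=\dim W_j$. Under the sign convention $CC(Eu_W)=(-1)^{\dim W}[T^*_W X]$, this yields $\varphi=\sum_j n_j(-1)^{d_j}Eu_{W_j}$ and hence
\[
\chi(X,\varphi)\;=\;\sum_j n_j(-1)^{d_j}\chi(W_j,Eu_{W_j}),
\]
so it suffices to show $(-1)^d\chi(W,Eu_W)\geq 0$ for every irreducible $W\subseteq X$ of dimension $d$. Let $\nu:\hat W\to W$ denote the Nash blowup, carrying the tautological rank-$d$ Nash tangent bundle $\hat T\hookrightarrow \nu^*(TX|_W)$ that extends $TW$ over $\reg W$. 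MacPherson's formula expressing the Euler--Mather class via the Nash blowup gives
\[
(-1)^d\chi(W,Eu_W)\;=\;\int_{\hat W}c_d(\hat T^*),
\]
and dualizing the tautological inclusion produces a surjection $\nu^*(T^*X|_W)\twoheadrightarrow \hat T^*$. Since nefness is preserved under restriction to subvarieties, pullback, and quotients, $\hat T^*$ is a nef vector bundle on $\hat W$, and the Fulton--Lazarsfeld positivity theorem for Chern classes of nef bundles gives $\int_{\hat W}c_d(\hat T^*)\geq 0$, as required.

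For the strict inequality when $T^*Y$ is ample, after the reduction the restriction $T^*X|_W$ is ample, its pullback $\nu^*(T^*X|_W)$ under the birational morphism $\nu$ is nef and big on $\hat W$, and so is the quotient $\hat T^*$. A Bloch--Gieseker-type covering argument, reducing to the case of a big and nef line bundle (whose top self-intersection is strictly positive), then produces $\int_{\hat W}c_d(\hat T^*)>0$. I expect the main technical obstacle to be the effectivity of the Lagrangian pushforward $f_*CC(\varphi)$ in the reduction step: the general formula computes it via the correspondence $T^*X\xleftarrow{df^*}X\times_Y T^*Y\to T^*Y$, which a priori can produce sign cancellations from the non-\'etale locus of $f$, and one must verify that for a finite morphism all the contributions remain non-negative through a careful stratified analysis along the ramification strata.
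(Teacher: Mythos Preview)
Your reduction step coincides with the paper's: both push forward along the finite morphism $f$ and need that $CC(f_*\varphi)$ remains effective. The paper does not attempt the stratified ramification analysis you sketch; it simply invokes the result (recorded here as Proposition~\ref{ppf}(i), taken from \cite{AMSS}) that finite pushforward preserves effectivity of characteristic cycles. So the gap you flag is already closed in the literature and should be cited rather than redone.

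After the reduction your route diverges. The paper applies Kashiwara's global index formula $\chi(Y,f_*\varphi)=CC(f_*\varphi)\cdot T^*_YY$ and then the Fulton--Lazarsfeld/Demailly--Peternell--Schneider positivity statement (Theorem~\ref{thm_nef}) \emph{directly} to each irreducible conormal cycle $T^*_ZY$ sitting in the nef bundle $T^*Y$. You instead pass through the Nash blowup, compute $(-1)^d\chi(W,Eu_W)=\int_{\hat W}c_d(\hat T^*)$, and use that $\hat T^*$ inherits nefness as a quotient of $\nu^*(T^*X|_W)$. In the nef case this is a legitimate alternative---essentially an explicit proof of the conormal instance of Theorem~\ref{thm_nef}---and has the virtue of being self-contained.

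The genuine weakness is your handling of the strict inequality. When $T^*Y$ is ample, the paper's argument is immediate: Fulton--Lazarsfeld gives $C\cdot Z_E>0$ for any conic $C$ in an ample $E$, applied on $Y$ before any blowup. In your approach ampleness is lost at the Nash blowup: $\nu$ is only birational, so $\nu^*(T^*X|_W)$ is nef and big but not ample, and a quotient of a nef and big vector bundle need not be big (the restriction of the big line bundle $\mathcal O(1)$ from $\mathbb P(\nu^*(T^*X|_W))$ to the sub-projective-bundle $\mathbb P(\hat T^*)$ need not remain big). The Bloch--Gieseker covering trick does not rescue this, since it is designed to reduce questions about ample bundles to line bundles, not to manufacture strict positivity from nef-and-big data on a quotient. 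The paper's approach sidesteps the issue entirely by never leaving the ample ambient bundle $T^*Y$.
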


A weaker version of Conjecture \ref{co2}, for $\varphi$ coming from a perverse sheaf on $X$ and $T^*X$ nef,  was  proved in \cite[Proposition 3.6]{LMW}. While the proof of Theorem \ref{th-main} follows the same lines as that of loc.cit. (see also Theorem \ref{mapp} in Section \ref{positive} for a more general statement), what we want to emphasize here are the various facets of Conjecture \ref{co2}, reflected in the statement of Conjecture \ref{co10},
if more general coefficients are allowed. Note that asking for the characteristic cycle of a constructible function to be effective is much weaker that asking for that function to come from a perverse sheaf. 
Conjecture \ref{co10} is obtained directly from Conjecture \ref{co2} by letting $\varphi$ be one of the following constructible functions supported on $Z$: $(-1)^{\dim_\C Z} Eu_Z$, $ic_Z$ and $\nu_Z$, respectively, all of which are known to have effective characteristic cycles, e.g., see the discussion in \cite[Section 7]{AMSS}. 
Note that, in the notations of Section \ref{prelim}, only $ic_Z$ comes from a perverse sheaf.

We prove Theorem \ref{th-main} (see also Theorem \ref{mapp}) following the same approach as in \cite{LMW} (see also \cite{AW}), based on Kashiwara's index theorem for constructible complexes of sheaves (cf. \cite{Kas}), together with (semi-)positivity results for nef (resp., ample) bundles (cf. \cite{DPS, FuLa}). It is well-known that if $X$ has non-positive (resp., negative) sectional curvature, then $T^*X$ is nef (resp., ample); see, e.g., \cite{DPS} or \cite[Lemmas 4.1, 4.2]{AW}. And, as already mentioned, it was conjectured in \cite{LMW} that aspherical complex projective manifolds have nef cotangent bundles.

\smallskip

It was also proved in \cite{AW} that Conjecture \ref{co2} is true if $X$ is an aspherical complex projective manifold (or, more generally, if $X$ has a large fundamental group \cite{Ko})  which  admits a cohomologically rigid almost faithful semi-simple representation, provided that $\varphi$ comes from a perverse sheaf on $X$. 
In Section \ref{positive}, we note that the proof of this result in loc.cit extends to all constructible functions with effective characteristic cycles, since the only operations involved in the proof preserve the effectivity of characteristic cycles of constructible functions.

\medskip

The paper is organized as follows. In Section \ref{prelim}, we review the relevant background about constructible complexes, constructible functions, characteristic cycles, and prove (in Proposition \ref{p21}) that Conjecture \ref{co2} is equivalent to Conjecture \ref{co10}(i). In Section \ref{positive}, we review (semi-)positivity results for nef and, resp., ample vector bundles on complex projective manifolds, and prove Theorems \ref{th-main} (as a consequence of the more general statement of Theorem \ref{mapp}).  
We conclude with a discussion around another conjecture of Hopf, whose proof in the projective/K\"ahler context is known to follow from classical results in complex algebraic geometry.

\medskip

{\bf Acknowledgments.} The author thanks Botong Wang and J\"org Sch\"urmann for useful discussions. Partial support was provided by the Simons Foundation and by  the  Romanian  Ministry  of  National  Education.


\section{Preliminaries: constructible complexes and characteristic cycles}\label{prelim}

Let $X$ be a complex algebraic manifold. We denote by $D^b_c(X)$ the bounded derived category of $\C$-constructible complexes  on $X$. Consider the functor $$CC:K_0(D^b_c(X)) \lra LCZ(T^*X)$$
which associates characteristic cycles to (Grothendieck classes of) constructible complexes on $X$ (e.g., see \cite[Definition 4.3.19]{Di} or \cite[Chapter IX]{KS}). Here, we let $LCZ(T^*X)$ denote the free abelian group spanned by the irreducible conic Lagrangian cycles in the cotangent bundle $T^*X$. Its elements are of the form $\sum_Z n_Z \cdot T^*_{Z}X$, for some $n_Z \in \bZ$ and $Z$ closed irreducible subvarieties of $X$. Recall that, if $Z$ is a closed irreducible subvariety of $X$ with smooth locus $Z_{\reg}$, its conormal bundle $T^*_{Z}X$ is  the closure in $T^*X$ of $$T^*_{Z_{\reg}}X:=\{
(z,\xi)\in T^*X \mid z \in Z_{\reg}, \ \xi \in T^*_zX, \ \xi\vert_{T_zZ_{\rm reg}}=0 \}.$$
One then has a group isomorphism
$$T:LCZ(T^*X) \lra Z(X)$$ to the group $Z(X)$ of algebraic cycles on $X$, defined on generators by: $T^*_ZX \mapsto (-1)^{\dim_\C Z} Z.$

A function $\varphi:Z \to \bZ$ on a complex algebraic variety (e.g., a closed subvariety of $X$) is said to be {\it constructible} if there is a Whitney stratification $\mathcal{S}$ of $Z$ so that $\varphi$ is constant on each (connected) stratum $S \in \mathcal{S}$. The Euler characteristic of such a constructible function $\varphi$ is defined by
$$\chi(Z,\varphi):=\sum_{S \in \mathcal{S}} \chi(S) \cdot \varphi_{\vert_S}.$$
 Let $F(Z)$ be the group of constructible functions on  $Z$. 
 
To any bounded constructible complex $\sF^{\centerdot} \in D^b_c(Z)$ on a complex algebraic variety $Z$, one associates a constructible function $\chi_{st}(\sF^{\centerdot})\in F(Z)$ by taking the stalkwise Euler characteristic, i.e.,
$$\chi_{st}(\sF^{\centerdot})(z):=\chi(\sF^{\centerdot}_z)$$
for any $z \in Z$. For example, $\chi_{st}(\bC_Z)=1_Z$, the indicator function of $Z$, and if $Z$ is pure-dimensional we let $$ic_Z:=\chi_{st}(IC_Z),$$ where $IC_Z$ is the intersection cohomology ($IC$-)complex of $Z$. Note that if $\varphi=\chi_{st}(\sFc)$, then $$\chi(Z,\varphi)=\chi(Z,\sFc).$$

For a closed subvariety  $Z$ in $X$, a constructible function (or complex) on $Z$ can be regarded as a constructible function (or complex) on $X$ by extending it by $0$ on $X \setminus Z$, hence for $\varphi\in F(Z)$ we have $\chi(Z,\varphi)=\chi(X,\varphi)$. For the purpose of this note we may, without any loss of generality, work with constructible functions on $X$ (with support in a closed subvariety) and their corresponding Euler characteristics.

The Euler characteristic induces by additivity an epimorphism
$$\chi_{st}:K_0(D^b_c(X)) \lra F(X).$$
Moreover, since the class map $D^b_c(X) \to K_0(D^b_c(X))$ is onto, $\chi_{st}$ is already an epimorphism on $D^b_c(X)$. The usual functors in sheaf theory, which respect the corresponding category of bounded constructible complexes, induce via $\chi_{st}$ well-defined group homomorphisms on the level of constructible functions (see, e.g., \cite[Section 2.3]{S}).

Another important example of a constructible function  on a complex algebraic variety $Z$ is the MacPherson {\it local Euler obstruction} function $Eu_Z$, see \cite{MP0}. When $Z$ is a closed irreducible subvariety $Z$ of $X$,  $Eu_Z$ can be seen as a function defined on  all of $X$ by setting $Eu_Z(x)=0$ for $x \in X \setminus Z$. In particular, one may consider the group homomorphism
\begin{equation}\label{eq_ZCF}
Eu:Z(X) \lra F(X)
\end{equation}
defined on an irreducible cycle $Z$ by the assignment $Z \mapsto Eu_Z$, and then extended by $\bZ$-linearity. It is well known (e.g., see \cite[Theorem 4.1.38]{Di} and the references therein), that the homomorphism
$Eu:Z(X) \to F(X)$
is an isomorphism.

The local Euler obstruction function appears in the formulation of the {\it local index theorem}, which in the above notations asserts the existence of the following commutative diagram  (e.g., see \cite[Section 5.0.3]{S} and the references therein):
\begin{equation}\label{eq_xy}
\xymatrix{
K_0(D^b_c(X)) \ar[d]_{CC} \ar[r]^{\chi_{st}} & F(X)   \\
LCZ(T^*X) \ar[r]_T^{\cong} & Z(X) \ar[u]_{Eu}^{\cong}
}
\end{equation}
In particular, one can associate a characteristic cycle to any constructible function. 
For example, if $Z$ is a closed irreducible subvariety of $X$, one has:
\be\label{cc} CC(Eu_Z)=(-1)^{\dim_\C Z}\cdot T^*_Z X.\ee
Note also that $$CC(\sF^{\centerdot})=CC(\chi_{st}(\sF^{\centerdot}))$$ for any constructible complex $\sF^{\centerdot} \in D^b_c(X)$.

Kashiwara's {\it global index theorem} \cite{Kas} computes the Euler characteristic of any bounded constructible complex $\sFc$ on $X$ with $supp(\sFc)$ compact, or, equivalently, that of the constructible function $\varphi=\chi_{st}(\sFc) \in F(X)$, by the formula:
\be\label{git} \chi(X,\sFc)=\chi(X,\varphi)= CC(\varphi) \cdot T^*_XX, \ee
that is, the intersection index  in the complex manifold $T^*X$, of the {characteristic cycle} of $\varphi$ with the zero section of $T^*X$.

\bd
If $\varphi\neq 0 \in F(X)$ with $CC(\varphi)=\sum_Z n_Z \cdot T^*_{Z}X$, we say that $CC(\varphi)$ is {\it effective} if all coefficients $n_Z$ are positive. 
\ed

For instance, \eqref{cc} implies that $CC\left((-1)^{\dim_\C Z} Eu_Z\right)$ is effective. It is also well known that if $0\neq \varphi=\chi_{st}(\sFc)$ is a nontrivial constructible function associated to a peverse sheaf $\sFc$ on $X$, then $CC(\varphi)$ is effective (e.g., see \cite[Corollary 4.7]{MS}). In particular, $CC(ic_Z)$ is effective. Finally, the Behrend function $\nu_Z$ has an effective characteristic cycle (see \cite{AMSS,Beh}).

\medskip

We can now prove the following.

\bp\label{p21}
Conjecture \ref{co10}(i) is equivalent to Conjecture \ref{co2}.
\ep

\begin{proof}
Conjecture \ref{co2} implies Conjecture \ref{co10}(i) by choosing $\varphi=(-1)^{\dim_\C Z} Eu_Z$ (extended by $0$ to all of $X$).

Conversely, assume Conjecture \ref{co10}(i) holds for all closed irreducible subvarieties $Z$ of $X$. Let $\varphi \in F(X)$ with $CC(\varphi)$ effective. Then
\be CC(\varphi)=\sum_{Z \subseteq X} n_Z \cdot T^*_Z X\ee
for uniquely determined closed irreducible subvarieties $Z$ of $X$ and positive integers $n_Z$. By \eqref{cc}, the coefficients $n_Z$ are determined by the following equality of constructible functions
\be 0 \neq \varphi =\sum_{Z \subseteq X} n_Z \cdot (-1)^{\dim_\C Z} \cdot Eu_Z.\ee
Hence $$\chi(X,\varphi)=\sum_{Z \subseteq X} n_Z \cdot  (-1)^{\dim_\C Z} \cdot \chi(Z, Eu_Z) \geq 0,$$
with the last inequality following by applying Conjecture \ref{co10}(i) to each subvariety $Z$ in the support of $CC(\varphi)$.
\end{proof}

{
In \cite[Proposition 7.2]{AMSS}, the authors list some basic operations of constructible functions which preserve the property of having an effective characteristic cycle. In particular, one has the following. 
\bp\cite[Proposition 7.2(2)]{AMSS}\label{ppf}
Let $Z$ be a closed reduced subscheme of a smooth complex algebraic variety $X$, and assume that $\varphi$ is a constructible function on $Z$ with $CC(\varphi)$ effective. 
\begin{itemize}
\item[(i)] Let $f:Z \to Z'$ be a finite morphism, with $Z'$ a closed reduced subscheme of a smooth complex algebraic variety $X'$. Then $f_*(\varphi)$ is a constructible function on $Z'$ with $CC(f_*(\varphi))$ effective. Here, $f_*(\varphi)$ is the constructible function on $Z'$ defined by:
$$f_*(\varphi)(z'):=\sum_{z \in f^{-1}(z')} \varphi(z).$$
\item[(ii)] Let $f:X' \to X$ be a morphism of smooth complex algebraic varieties such that $f\colon Z':=f^{-1}(Z) \to Z$ is a smooth morphism of relative dimension $d$. Then $(-1)^d f^*(\varphi)=(-1)^d \varphi \circ f$ is a constructible function on $Z'$ with effective characteristic cycle.
\end{itemize}
\ep
}

As already mentioned, the above functors of constructible functions coincide with those induces via $\chi_{st}$ from the corresponding functors of constructible complexes of sheaves.

\section{Positivity results for nef bundles. Applications}\label{positive}

 In this section, we recall (semi-)positivity results for ample (resp., nef) vector bundles on complex projective manifolds. We use such results to deduce (semi-)positivity statements for the Euler characteristics of constructible functions with effective characteristic cycles, thus proving Theorem \ref{th-main}. 
 
\bd\rm
If $\mathcal{E}$ is a vector bundle on a  complex projective manifold $X$, denote by $\mathbf{P}(\mathcal{E})$ the projective bundle of hyperplanes in the fibers of $\mathcal{E}$. 
A vector bundle $\mathcal{E}$ on $X$ is called {\it ample} (resp. {\it nef}) if the line bundle $\mathcal{O}(1)$ on $\mathbf{P}(\mathcal{E})$ is ample (resp. nef) in the classical sense. 
\ed

The nef condition is a degenerate ampleness condition. Properties of nef/ample bundles are studied, e.g.,  in \cite{DPS, FuLa, La}.

The following semi-positivity result was proved by Fulton-Lazarsfeld \cite{FuLa} for ample bundles, and extended to nef bundles by Demailly-Peternell-Schneider \cite{DPS} (cf., e.g., \cite[Section 8.1.B]{La} for the definition of the intersection number).
\begin{theorem}{\rm (\cite[Proposition 2.3]{DPS}, \cite[Theorem II]{FuLa}))}\label{thm_nef}
Let $X$ be a complex projective manifold and let $E$ be a rank $r$ nef (resp., ample) vector bundle on $X$. For any $r$-dimensional conic subvariety $C$ of $E$, one has
\[
 C \cdot  Z_E  \geq 0 \ \ \ ({\rm resp.}, \ >0),
\]
where $Z_E$ is the zero section of $E$, and $C \cdot  Z_E$ denotes the intersection number of cycles in $E$. 
\end{theorem}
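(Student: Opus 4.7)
The plan is to reduce $C\cdot Z_E$ to an integral of a Chern polynomial of $E$ over a subvariety of $X$, and then to appeal to the Fulton--Lazarsfeld theorem that every Schur polynomial in the Chern classes of an ample (resp., nef) bundle pairs positively (resp., non-negatively) with any subvariety of matching dimension. I may assume $C$ irreducible. Setting $\pi\colon E\to X$, $V=\pi(C)$ and $d=\dim V\leq r$, the intersection occurs inside $E|_V$, and restrictions of nef (resp., ample) bundles remain nef (resp., ample), so I would replace $(X,E)$ by $(V,E|_V)$ and assume $V=X$.

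Irreducibility and the conic condition force the dichotomy $d<r$, or $d=r$ with $C=Z_{E|_V}$ (the only $\C^{*}$-invariant point in a zero-dimensional fiber is the origin). In the degenerate case $d=r$, excess intersection gives $C\cdot Z_E=\int_V c_r(E|_V)$, which is $\geq 0$ (resp., $>0$) by the top-degree instance of Schur positivity for nef (resp., ample) bundles. For the main case $d<r$, let $\widetilde C\subseteq\mathbf{P}(E|_V)$ be the closure of the $\C^{*}$-quotient of $C\setminus Z_E$, an irreducible subvariety of dimension $r-1$. Using Fulton's refined Gysin pullback along the zero section, equivalently deformation to the normal cone of $Z_E$ in $E$ (whose exceptional divisor is $\mathbf{P}(E)$ with conormal bundle $\mathcal{O}(1)$), I would expand
\[
 C\cdot Z_E \;=\; \int_V P_{\widetilde C}\bigl(c_\bullet(E|_V)\bigr),
\]
where $P_{\widetilde C}$ is a polynomial in Chern classes of $E|_V$ whose coefficients record the degrees of $\widetilde C$ against the tautological classes on $\mathbf{P}(E|_V)$, collapsed via the Grothendieck relation $\sum_{i=0}^{r}\xi^{i}\cdot q^{*}c_{r-i}(E|_V)=0$. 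The crucial step is to recognise $P_{\widetilde C}$ as a non-negative integer combination of Schur polynomials $P_\lambda(c_\bullet(E|_V))$ with $|\lambda|=d$; once this is secured, Fulton--Lazarsfeld in the ample case, or its Demailly--Peternell--Schneider extension in the nef case, directly supplies the desired positivity, with strict inequality for $E$ ample.

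The main obstacle is this Schur-positivity of $P_{\widetilde C}$, which amounts to decomposing $[\widetilde C]\in A_{*}(\mathbf{P}(E|_V))$ as a non-negative combination of Schubert-type cycles relative to the tautological filtration on $\mathbf{P}(E|_V)$. A conceptually cleaner alternative, closer to the DPS approach, is to work on the compactification $\overline E=\mathbf{P}(E\oplus\mathcal{O})$ and use the identity
\[
[Z_E]=c_r\bigl(p^{*}E\otimes\mathcal{O}_{\overline E}(1)\bigr)=\sum_{k=0}^{r}p^{*}c_k(E)\cdot\xi^{r-k};
\]
since $p^{*}E$ and $\xi$ are both nef (resp., ample) on $\overline E$, this expression is a non-negative (resp., positive) combination of Schur-type intersections, whose integral over the $r$-dimensional closure $\overline C$ of $C$ is $\geq 0$ (resp., $>0$) by Fulton--Lazarsfeld/DPS.
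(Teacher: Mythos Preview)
The paper does not supply its own proof of this statement; it is quoted from \cite{DPS} and \cite{FuLa} as a known input. There is therefore nothing in the paper to compare against, and I comment only on the soundness of your outline.

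There is a genuine gap in the ample case. In your alternative approach on $\overline E=\mathbf P(E\oplus\mathcal O)$ you assert that ``$p^{*}E$ and $\xi$ are both nef (resp., ample)''. The nef claims are fine, but the ample ones are false: $p^{*}E$ is pulled back along a morphism with positive-dimensional fibres and hence is never ample once $\dim X>0$, while $\mathcal O_{\overline E}(1)$ is ample if and only if $E\oplus\mathcal O$ is, which fails because of the trivial summand. In fact even the tensor product $p^{*}E\otimes\mathcal O_{\overline E}(1)$ need not be ample (for $E=\mathcal O_{\mathbf P^1}(1)$ it restricts trivially to the divisor at infinity of $\overline E$), so your identity $[Z_E]=c_r\bigl(p^{*}E\otimes\mathcal O_{\overline E}(1)\bigr)$ together with Schur positivity yields only $C\cdot Z_E\geq 0$, not the strict inequality. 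Your first approach does not close this gap either: you correctly flag the Schur-positivity of the polynomial $P_{\widetilde C}$ as ``the main obstacle'' and then leave it unresolved.

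The argument actually given in \cite{DPS} sidesteps both difficulties. For $C$ irreducible with $C\neq Z_E$, a Segre-class computation yields
\[
C\cdot Z_E\;=\;\int_{\mathbf P(C)} c_1\bigl(\mathcal O_{\mathbf P(E)}(1)\bigr)^{\,r-1},
\]
where $\mathbf P(C)\subset\mathbf P(E)$ is the $(r{-}1)$-dimensional projectivisation of the cone $C$. Since, by definition, $E$ is nef (resp.\ ample) precisely when the line bundle $\mathcal O_{\mathbf P(E)}(1)$ is nef (resp.\ ample), the right-hand side is $\geq 0$ (resp.\ $>0$) immediately---no Schur decomposition and no passage to $\overline E$ are required. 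The remaining case $C=Z_{E|_V}$ is exactly your degenerate case $d=r$, handled as you say by $\int_V c_r(E|_V)$.
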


\begin{rmk}
The notion of nef vector bundle can be extended to the K\"ahler manifold context, with nefness of a line bundle understood in the sense of \cite[Definition 1.2]{DPS}; this coincides with the usual definition in the projective case. Then the above theorem holds more generally, for nef vector bundles on compact K\"ahler manifolds (cf. \cite[Proposition 2.3]{DPS}).
\end{rmk}

The first result of this section involves nef/ample cotangent bundles. As already mentioned in the Introduction, complex projective manifolds with non-positive (resp., negative) sectional curvature have nef (resp., ample) cotangent bundle. 

\begin{ex}
The class of complex projective manifolds whose cotangent bundles are nef is closed under taking finite (unramified) covers, products, and subvarieties, and it includes smooth subvarieties of abelian varieties.  However, if $A$ is an abelian variety of dimension $n$ and $X$ is a smooth subvariety of $A$ of dimension $d$ and codimension $n-d<d$, then the cotangent bundle of $X$ is not ample (e.g., see \cite[Example 7.2.3]{La}). On the other hand, for an arbitrary $m$-dimensional complex projective manifold $M$ and each $n \leq m/2$, there are plenty of smooth $n$-dimensional subvarieties $X$ of $M$ with ample cotangent bundle (e.g., complete intersections of sections of $M$ by general hypersurfaces of sufficiently high degrees in the ambient projective space, see \cite{BD, X}). Finally, Kratz \cite[Theorem 2]{Kr} showed that if $X$ is a complex projective manifold whose universal cover is a bounded domain in $\C^n$ or in a Stein manifold, then $T^*X$ is nef. This result prompted Liu-Maxim-Wang \cite{LMW} to conjecture that the nefness of the cotangent bundle should hold more generally, if the universal cover is Stein, a claim refuted recently in \cite{W}.
\end{ex}

We can now prove the following result.
\begin{theorem}\label{mapp}
Let $X$ be a complex algebraic variety, and let $f\colon X \to Y$ be a morphism to a complex projective manifold $Y$ with $T^*Y$ nef. Let $\varphi \in F(X)$ be a constructible function on $X$ such that the characteristic cycle of the constructible function $f_*(\varphi) \in F(Y)$ is effective. Then $\chi(X,\varphi) \geq 0$, and the inequality is strict if $T^*Y$ is ample.
\end{theorem}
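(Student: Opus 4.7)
The overall plan is to push the problem from $X$ to $Y$ and then combine Kashiwara's global index theorem \eqref{git} with the Fulton--Lazarsfeld/Demailly--Peternell--Schneider positivity result recorded in Theorem \ref{thm_nef}.

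First, I would use the naturality of the diagram \eqref{eq_xy} under pushforwards: the MacPherson pushforward of constructible functions (integration along fibers with respect to Euler characteristic) is compatible via $\chi_{st}$ with the sheaf-theoretic $Rf_*$, and pushing to a point gives the identity $\chi(X,\varphi)=\chi(Y,f_*\varphi)$. Hence it suffices to prove $\chi(Y,\psi)\geq 0$ for $\psi:=f_*\varphi \in F(Y)$, with strict inequality when $T^*Y$ is ample.

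Next, since $Y$ is projective (so $\mathrm{supp}(\psi)$ is automatically compact), Kashiwara's global index theorem \eqref{git} applied to $\psi$ yields
\be
\chi(Y,\psi)=CC(\psi)\cdot T^*_Y Y,
\ee
computed as an intersection index inside $T^*Y$. The effectivity hypothesis decomposes
\be
CC(\psi)=\sum_{Z\subseteq Y} n_Z\cdot T^*_Z Y,
\ee
where each $n_Z$ is a \emph{positive} integer and the sum is nonempty by the very definition of effectivity. Each conormal variety $T^*_Z Y$ is an irreducible conic subvariety of $T^*Y$ of complex dimension $\dim_\C Y$ (being Lagrangian), and this number coincides with the rank of the vector bundle $T^*Y$. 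I am thus precisely in the situation of Theorem \ref{thm_nef} applied to $E=T^*Y$, which yields $T^*_Z Y\cdot T^*_Y Y\geq 0$ for every $Z$, with strict inequality when $T^*Y$ is ample. Linearity of intersection numbers then gives
\be
\chi(Y,\psi)=\sum_Z n_Z\cdot \bigl(T^*_Z Y\cdot T^*_Y Y\bigr)\geq 0,
\ee
and the inequality is strict in the ample case because all the $n_Z$ are positive and at least one term occurs.

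The only conceptual step, and thus the only potential obstacle, is the alignment of the two inputs: one must recognize the conormal cycles appearing in $CC(\psi)$ as top-rank Lagrangian conic subvarieties of $T^*Y$, which is exactly the data against which the Fulton--Lazarsfeld/Demailly--Peternell--Schneider theorem evaluates positively on the zero section. Once this identification is made, the argument is a direct concatenation of \eqref{git} and Theorem \ref{thm_nef}; the fact that effectivity is preserved by $f_*$ is built into the hypothesis of the theorem, so neither Proposition \ref{ppf} nor any properness discussion for $f$ is needed at this stage. This generalizes \cite[Proposition 3.6]{LMW} from perverse sheaves to arbitrary constructible functions with effective characteristic cycle.
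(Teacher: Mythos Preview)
Your proposal is correct and follows essentially the same route as the paper: reduce to $Y$ via $\chi(X,\varphi)=\chi(Y,f_*\varphi)$, apply Kashiwara's global index formula \eqref{git} to $\psi=f_*\varphi$, expand $CC(\psi)$ as an effective combination of conormal cycles, and invoke Theorem~\ref{thm_nef} termwise. Your added remarks (that the conormal varieties are Lagrangian of dimension equal to the rank of $T^*Y$, and that effectivity forces the sum to be nonempty so the ample case is genuinely strict) make explicit points the paper leaves implicit, but the structure of the argument is identical.
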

\begin{proof}
First note that \begin{equation}\label{eq1}\chi(X,\varphi)=\chi(Y, f_*(\varphi)),\end{equation}
so it suffices to show that $\chi(Y, f_*(\varphi)) \geq 0$, with strict inequality if $T^*Y$ is ample. Let
$$CC(f_*(\varphi))=\sum_{Z\subseteq Y} n_Z  \cdot T^*_ZY,$$ for uniquely determined  closed irreducible subvarieties $Z$ of $Y$, and $n_Z >0$ by the effectivity assumption. Since $T^*Y$ is nef, one gets by Theorem \ref{thm_nef} that $$T^*_ZY \cdot T^*_YY \geq 0,$$ with strict inequality if $T^*Y$ is ample.  Kashiwara's global index formula \eqref{git} then yields:
\begin{equation}\label{eq2}\chi(Y,f_*(\varphi))=\sum_{Z \subseteq Y} n_Z  \left( T^*_ZY \cdot T^*_YY\right) \geq 0,\end{equation}
with strict inequality if $T^*Y$ is ample.
The assertion of the theorem follows now by combining \eqref{eq1} and \eqref{eq2}.
\end{proof}

Theorem \ref{th-main} is an immediate consequence of Theorem \ref{mapp}, as we now show.
\begin{proof}[Proof of Theorem \ref{th-main}]
Let $f:X \to Y$ be a finite morphism between complex projective manifolds,  with $T^*Y$ nef. 
Since $\varphi \in F(X)$ has, by assumption, an effective characteristic cycle, it follows from Proposition \ref{ppf}(i) that  $CC(f_*(\varphi))$ is effective in $T^*Y$. The assertion follows now from Theorem \ref{mapp}.
\end{proof}

\begin{ex}
Besides the situation considered in Theorem \ref{th-main}, there are other interesting classes of morphisms $f:X \to Y$ (and constructible functions $\varphi \in F(X)$ defined on their domain) satisfying the assumptions of Theorem \ref{mapp}. For instance, if $\varphi \in F(X)$ comes from a perverse sheaf, let $f:X \to Y$ be a morphism such that $Rf_*$ preserves perverse sheaves (e.g., a closed embedding or a quasi-finite affine morphism); then $f_*(\varphi)\in F(Y)$ also comes from a perverse sheaf and hence, since $Y$ is assumed smooth, $CC(f_*(\varphi))$ is effective in $T^*Y$. Another example is provided by the projection $f:X \to Y$ from a semi-abelian variety $X$ onto its abelian part $Y$. In this case, it was shown in \cite[Section 8]{AMSS} that if $\varphi \in F(X)$ comes from a perverse sheaf, then $f_*(\varphi)\in F(Y)$ also comes from a perverse sheaf. Hence  $CC(f_*(\varphi))$ is effective, and Theorem \ref{mapp} applies to give $\chi(X,\varphi) \geq 0$, an inequality initially proved in \cite[Corollary 1.4]{FK}. For more examples in this direction, the interested reader may consult \cite[Proposition 8.4, Example 8.5]{AMSS}.
\end{ex}

We next discuss the following extension of the main result of \cite{AW} to the case of effective characteristic cycle.
\begin{theorem}\label{34}
Let $X$ be a smooth complex projective variety with large algebraic fundamental group (e.g., $X$ is aspherical). Suppose that there exists a cohomologically rigid almost faithful semi-simple representation $\rho:\pi_1(X) \to GL(r,\bC)$. Let $\varphi$ be a constructible function on $X$ with $CC(\varphi)$ effective. Then $\chi(X,\varphi) \geq 0$.
\end{theorem}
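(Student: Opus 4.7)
The plan is to adapt the proof of [AW]'s perverse sheaf result to constructible functions with effective characteristic cycle, observing that the AW argument is essentially a statement about Lagrangian conic cycles in $T^*X$ combined with a Hodge-theoretic positivity input, and never requires derived-categorical features of perverse sheaves beyond the effectivity of $CC$.

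First, I would invoke Kashiwara's global index formula \eqref{git} to rewrite
\[
\chi(X,\varphi) \;=\; CC(\varphi)\cdot T^*_X X \;=\; \sum_{Z \subseteq X} n_Z \,\bigl(T^*_Z X \cdot T^*_X X\bigr),
\]
where the sum runs over the closed irreducible subvarieties $Z$ appearing in the support of $CC(\varphi)$ and every $n_Z > 0$ by the effectivity hypothesis. The task reduces to producing enough non-negativity at the level of the intersection numbers $T^*_Z X \cdot T^*_X X$ (either cycle-by-cycle, or at least in the relevant weighted combination).

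Next, I would reproduce the positivity input of [AW]. Since $\rho$ is cohomologically rigid, almost faithful, and semi-simple, Simpson's non-abelian Hodge theorem promotes it to a polarized complex variation of Hodge structure on $X$; combined with the largeness of the algebraic fundamental group (via \cite{Ko}), AW extract a numerically positive class on $T^*X$ that dominates every conormal cycle appearing in a characteristic cycle. This extraction is a statement about numerical classes of conic Lagrangian cycles in $T^*X$ and does not use that the coefficients $n_Z$ come from a perverse sheaf, only that they are non-negative integers. Any auxiliary constructions used by AW---passage to a finite étale cover of $X$ to make $\rho$ an honest representation after killing the finite kernel of an almost faithful map, or pushforward along a finite auxiliary morphism---preserve the effectivity of the characteristic cycle by Proposition \ref{ppf}(i) and (ii) (with relative dimension $d=0$ in the étale case), so the hypotheses on $\varphi$ are stable under exactly the operations that their argument requires. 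The Euler characteristic is controlled along étale covers in the standard way (multiplied by the degree), so one can reduce to the covered situation without loss.

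The main obstacle will be isolating the AW positivity statement cleanly from its perverse-sheaf packaging, so that it applies to an arbitrary effective combination $\sum n_Z T^*_Z X$ rather than only to the specific characteristic cycle of a given perverse sheaf. If one reads [AW] carefully this separation is essentially automatic, as the text asserts, but making it explicit is where the work lies: one must check that each appeal to perversity in their argument is in fact only an appeal to the non-negativity of the $n_Z$. Once that is done, Kashiwara's formula \eqref{git} combined with the functoriality provided by Proposition \ref{ppf} closes the argument.
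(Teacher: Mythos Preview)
Your overall strategy---run the Arapura--Wang argument and observe that every operation it requires preserves effectivity of characteristic cycles via Proposition \ref{ppf}---is exactly what the paper does. But your description of the positivity mechanism is inaccurate in a way that matters. You write Kashiwara's formula on $X$ and then assert that AW ``extract a numerically positive class on $T^*X$ that dominates every conormal cycle''. They do not. Trying to show $T^*_Z X \cdot T^*_X X \geq 0$ directly on $X$ is essentially Conjecture~\ref{co10}(i) itself, so this would be circular. The positivity in \cite{AW} lives on the \emph{period domain quotient} $M = \Gamma \backslash D$, not on $X$: after passing to a finite \'etale cover to make $\Gamma$ torsion-free (your Proposition~\ref{ppf}(ii) step is correct here), the period map $\alpha : X \to M$ of the CVHS is shown to be \emph{finite} (this is precisely where largeness of $\pi_1$ and almost-faithfulness enter), and one computes $\chi(X,\varphi) = \chi(M, \alpha_*\varphi)$ with $CC(\alpha_*\varphi)$ effective in $T^*M$ by Proposition~\ref{ppf}(i). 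The irreducible components $Z$ of the image are \emph{horizontal} subvarieties of $M$ by Griffiths transversality, and \cite[Proposition 6.1]{AW} gives $T^*_Z M \cdot T^*_M M \geq 0$ for such $Z$ from the curvature properties of the horizontal tangent bundle on $D$.

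So the period map is not an ``auxiliary morphism'' to be swept into a parenthetical---it is the entire point. Your sketch would be correct if you replaced the sentence about a positive class on $T^*X$ with the two-step reduction (finite cover, then finite period map to $M$) and located the intersection-number positivity on $M$.
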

\begin{proof}
The proof of this result  in \cite{AW} for perverse sheaves coefficients can be readily extended to the more general setup of effective characteristic cycles, since the only operations involved in the proof preserve the effectivity of characteristic cycles of constructible functions. For the benefit of the reader, we summarize the main steps in the proof, by adapting \cite[Theorems 1.8 and 1.6]{AW} to the setup of our paper.

Let $L_\rho$ be the local system on $X$ corresponding to the representation $\rho$. As noted in \cite{AW}, $L_\rho$ underlies a complex variation of Hodge structure (CVHS) $V$ with discrete monodromy. 

{\it Step 1.} \ Upon passing to a finite unramified cover $\pi:X'\to X$, 
one can moreover assume that the monodromy group $\Gamma=\rho(\pi_1(X))$ of $L_\rho$ is torsion free (cf. \cite[Lemma 5.1]{AW}). After performing this step, the Euler characteristic $\chi(X,\varphi)$ gets multiplied by the degree of the cover $\pi$ and hence it preserves its sign, and the characteristic cycle of $\pi^*(\varphi)$ is still effective (cf. Proposition \ref{ppf}(ii)).

The effect of performing Step 1 is that the quotient $M=\Gamma \backslash D$ of the Griffiths period domain by the monodromy group $\Gamma$ is now a manifold, and the period map of the CVHS $V$ induces a horizontal map $\alpha:X \to M$ (i.e., its derivative lies in the subbundle $T^hM \subset TM$ induced from the horizontal subbundle of $D$).

{\it Step 2.} \ Since $\chi(X,\varphi)=\chi(M,\alpha_*(\varphi))$, it thus suffices to show that $\chi(M,\alpha_*(\varphi)) \geq 0$.
Conditions of the theorem force $\alpha :X \to M$ to be quasi-finite, and hence a finite morphism (since $X$ is compact).  Hence, by Proposition \ref{ppf}(i), the characteristic cycle of the constructible function $\alpha_*(\varphi)$ (supported on the compact subvariety $\alpha(X)$) is effective in $T^*M$. So if $$CC(\alpha_*(\varphi))=\sum_Z n_Z \cdot T^*_ZM,$$
it suffices to show that for any subvariety $Z$ appearing in the sum one has \be\label{71} T^*_ZM \cdot T^*_MM \geq 0.\ee Griffiths transversality implies that $\alpha(X)$  is a horizontal subvariety of $M$ (in the sense of \cite[Definition 5.2]{AW}), and hence by \cite[Lemma 5.3]{AW}, so is any irreducible subvariety appearing in $CC(\alpha_*(\varphi))$. The desired inequality \eqref{71} follows now from \cite[Proposition 6.1]{AW}.
\end{proof}


We conclude this note with the following discussion about complex projective (or compact K\"ahler) manifolds with {\it nef tangent bundles}. Let $X$ be a complex projective manifold (or even a K\"ahler manifold) whose 
 tangent bundle is nef. Examples include rational homogeneous manifolds (e.g., complex projective spaces, flag manifolds, or quotients $G/P$ of a simply connected complex Lie group $G$ by a parabolic subgroup), abelian varieties (or complex tori), etc. (cf. \cite[Section 3.A]{DPS}). Moreover, as we will argue below, complex projective (or compact K\"ahler) manifolds with non-negative sectional curvature have nef tangent bundles.
By applying Theorem \ref{thm_nef} to the tangent bundle $TX$, one gets via the Gauss-Bonnet formula that 
\be\label{hp} \chi(X)=\int_X c_n(TX)=T_XX \cdot T_XX \geq 0,\ee
where $T_XX$ is the zero section of $TX$. Hence one gets the following result from \cite{DPS}.
\begin{prop}
\label{hn}
If $X$ is a complex projective (or compact K\"ahler) manifold 
with nef tangent bundle, then $\chi(X) \geq 0$. 
\end{prop}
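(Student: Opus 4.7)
The plan is to apply the semi-positivity statement of Theorem \ref{thm_nef} directly to the tangent bundle $E = TX$, which is nef by hypothesis and has rank $n := \dim_\C X$. The crucial observation is that the zero section $T_X X \subset TX$ is itself an $n$-dimensional conic subvariety of the total space $TX$, so it qualifies as the subvariety $C$ appearing in the statement of Theorem \ref{thm_nef}. Taking $E = TX$ and $C = T_X X$ in that theorem immediately yields the self-intersection inequality
\[ T_X X \cdot T_X X \geq 0, \]
where the intersection is computed inside the complex manifold $TX$. Since the two cycles meet along the compact zero section, this intersection number is well-defined despite the non-compactness of the ambient total space.

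To convert this into the desired statement about $\chi(X)$, I would invoke two classical identities: the self-intersection formula for the zero section of a rank $n$ vector bundle $E \to X$, which reads $Z_E \cdot Z_E = \int_X c_n(E)$, and the Chern--Gauss--Bonnet theorem, which identifies $\int_X c_n(TX)$ with the topological Euler characteristic $\chi(X)$. Stringing these together gives
\[ \chi(X) = \int_X c_n(TX) = T_X X \cdot T_X X \geq 0, \]
which is exactly the content of the display \eqref{hp} preceding the proposition.

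This argument is essentially a one-line deduction, so there is no serious technical obstacle. In the compact K\"ahler (non-necessarily projective) case, the reasoning goes through verbatim using the K\"ahler extension of Theorem \ref{thm_nef} recorded in the remark following its statement (cf. \cite[Proposition 2.3]{DPS}). The only point that deserves a moment of care is the well-definedness of the self-intersection number on the non-compact total space $TX$, which is standard once one notes that the intersection locus is concentrated along the compact zero section. If desired, one can also remark here that complex projective (or compact K\"ahler) manifolds with non-negative sectional curvature have nef tangent bundle, so that Proposition \ref{hn} encompasses the classical Hopf-type statement in that geometric setting.
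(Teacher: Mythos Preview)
Your argument is correct and is essentially identical to the paper's own: apply Theorem~\ref{thm_nef} with $E=TX$ and $C=T_XX$, then identify the self-intersection $T_XX\cdot T_XX$ with $\int_X c_n(TX)=\chi(X)$ via Gauss--Bonnet, exactly as in display~\eqref{hp}. You have simply spelled out a few more details (conicity of the zero section, the K\"ahler remark) than the paper does.
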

Note that Theorem \ref{thm_nef} also shows that the inequality in \eqref{hp} is strict if $TX$ is ample. In fact, Mori \cite{Mor} proved that a complex projective manifold with ample tangent bundle is isomorphic to a complex projective space.

Proposition \ref{hn} is a generalization in the complex  projective (or K\"ahler) context of another conjecture of Hopf (also appearing on Yau's problem list \cite{Yau}):
\begin{conj}[Hopf]\label{ho2}
A compact, even-dimensional Riemannian manifold with positive sectional curvature has positive Euler characteristic. A compact, even-dimensional Riemannian manifold with non-negative sectional curvature has non-negative Euler characteristic.
\end{conj}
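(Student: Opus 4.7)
My plan is to reduce the conjecture, whenever possible, to the complex-geometric framework already developed in this paper, and then to flag where the purely Riemannian setting becomes the essential obstruction.

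First I would handle the K\"ahler (in particular, complex projective) case. The starting point is the standard fact that if a compact K\"ahler manifold $X$ has non-negative (respectively positive) Riemannian sectional curvature, then its holomorphic bisectional curvature is non-negative (resp.\ positive), and consequently the tangent bundle $TX$ is nef (resp.\ ample) in the sense of \cite{DPS}. For the nef case this is a direct translation via the curvature of the Chern connection on $(TX,h)$; for the ample case under strict positivity one can either repeat the curvature computation to obtain Griffiths positivity of $TX$, or simply invoke Mori's theorem \cite{Mor}, which identifies any compact complex manifold with ample tangent bundle as a complex projective space, whose Euler characteristic is manifestly positive.

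With this reduction in place, the conclusion in the K\"ahler case follows immediately from Proposition \ref{hn} and the discussion preceding it: the Chern--Gauss--Bonnet formula expresses $\chi(X)$ as the self-intersection $T_XX \cdot T_XX$ of the zero section of $TX$, and Theorem \ref{thm_nef} applied to $E=TX$ with the conic subvariety $C=T_XX$ gives $\chi(X)\geq 0$, with strict inequality if $TX$ is ample. This settles Conjecture \ref{ho2} for compact K\"ahler manifolds, together with the sharp version under strict positivity.

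For a general compact even-dimensional Riemannian manifold, the natural strategy would be to try to show that the Chern--Gauss--Bonnet integrand is pointwise of the correct sign under the curvature hypothesis. This succeeds in real dimension $4$ by Milnor's argument, cf.\ \cite{Ch}, but I expect this to be the main and genuinely hard obstacle in higher dimensions: Geroch produced algebraic curvature tensors with non-negative (indeed positive) sectional curvature whose Chern--Gauss--Bonnet density is negative, ruling out any pointwise argument already in real dimension $6$. A successful attack would therefore have to either redistribute the integrand by an exact term, in the spirit of Thorpe's modification in dimension $4$, or find a genuinely global or topological input not visible from pointwise curvature. I would not attempt either here; what the framework of this note delivers cleanly is the K\"ahler case, via the nef/ample transfer combined with Theorem \ref{thm_nef}.
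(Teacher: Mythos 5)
Your proposal matches the paper's own treatment: the statement is an open conjecture in the general Riemannian setting, and what the paper actually establishes (in the discussion surrounding Proposition \ref{hn}) is exactly the K\"ahler/projective case, via the same chain you use — non-negative (resp.\ positive) sectional curvature implies non-negative (resp.\ positive) bisectional curvature, hence $TX$ is Griffiths semipositive (resp.\ positive) and therefore nef (resp.\ ample) by \cite{DPS} and \cite{Gri}, so Chern--Gauss--Bonnet together with Theorem \ref{thm_nef} gives $\chi(X)\geq 0$ (resp.\ $>0$). Your remarks on the failure of the pointwise Gauss--Bonnet approach in the general Riemannian case are accurate but lie outside what the paper claims to prove.
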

Indeed, let $X$ be a complex projective or compact K\"ahler manifold with non-negative (resp., positive) sectional curvature.  The bisectional curvature can be written as a positive linear combination of two sectional curvatures (e.g., see \cite{Z}), hence the bisectional curvature of $X$ is non-negative (resp., positive).  
This means (by definition) that the tangent bundle $TX$ is Griffiths semipositive (resp., positive). Furthermore, by \cite{DPS} (resp., \cite{Gri}), Griffiths semipositive (resp., positive) bundles are nef (resp., ample).

Let us finally note that in the K\"ahler context, Siu-Yau \cite{SY} showed that if $X$ has positive bisectional curvature then $X$ is biholomorphic to a complex projective space. Furthermore, a classification of all compact K\"ahler manifolds with  non-negative  bisectional curvature was obtained by Mok \cite{Mo}, and this can be used along with results of \cite{DPS} to give a direct proof of Conjecture \ref{ho2} in this case.


\end{document}